
\documentclass[conference]{IEEEtran}
%


%

%
\usepackage{cite}

%
\ifCLASSINFOpdf
\else
\fi
%
%

%
\usepackage{amsmath}
\usepackage{amsfonts}
\usepackage{amsthm}
\newtheorem{theorem}{Theorem}

\newtheorem{rem*}{Remark}
\newtheorem{question}{Question}

\DeclareMathOperator{\TR}{tr}
\DeclareMathOperator{\SPAN}{span}

\newcommand{\UNFs}[1]{\overline{\Omega}_{n,m} {(\mathbb {#1})} } 
\newcommand{\UNTFs}[1]{\Omega_{n,m} {(\mathbb {#1})} } 
\newcommand{\Gr}[1]{\mu_{n,m} (\mathbb #1)} 
\newcommand{\Grbar}[1]{\overline{\mu}_{n,m} (\mathbb #1)} 

\hyphenation{op-tical net-works semi-conduc-tor}

\begin{document}
%
\title{On the structures\\ of Grassmannian frames}

\author{\IEEEauthorblockN{John I. Haas IV}
\IEEEauthorblockA{Department of Mathematics\\
University of Missouri\\
Columbia, Missouri 65211\\
http://faculty.missouri.edu/$\sim$haasji/\\
haasji@missoui.edu
}
and\\
\IEEEauthorblockN{Peter G. Casazza}
\IEEEauthorblockA{Department of Mathematics\\
University of Missouri\\
Columbia, Missouri 65211\\
casazzap@missoui.edu
}
}


\maketitle

\begin{abstract}
A common criterion in the design of finite Hilbert space frames is minimal coherence, as this leads to error reduction in various signal processing applications. Frames that achieve minimal coherence relative to all unit-norm frames are called {\bf Grassmannian frames}, a class which includes the well-known equiangular tight frames. However, the notion of ``coherence minimization'' varies according to the constraints of the ambient optimization problem, so there are other types of ``minimally coherent'' frames one can speak of. 

In addition to Grassmannian frames, we consider the class of frames which minimize coherence over the space of frames which are both unit-norm and tight, which we call {\bf $1$-Grassmannian frames}. We observe that these two types of frames coincide in many settings, but not all; accordingly, we investigate some of the differences between the resulting theories.  For example, one noteworthy advantage of $1$-Grassmannian frames is that their optimality properties are preserved under the Naimark complement.

\end{abstract}


%
\IEEEpeerreviewmaketitle

\section{Introduction}

Interest in the theory of Hilbert space frame theory has grown rapidly over the last two decades~\cite{MR836025, MR2892742}. While the reasons for this recent expansion are numerous and diverse~\cite{MR2105392, }, the quintessential application of finite frames involves the analysis of signals, for example, quantization~\cite{MR1486646}, reconstructing an image sent from a damaged transmitter~\cite{MR2021601}, or determining quantum states~\cite{Zau, MR2662471}.  

This note concerns a certain quantitative frame property, called {\it coherence}.  It is known~\cite{MR2021601, MR1984549} that frames with minimal coherence are often the ``best'' models for signal reconstruction apparatuses, as they are optimally robust against certain forms of noise and erasures.

We consider two disparate notions of {\it optimally incoherent} frames:  the familiar {\bf Grassmannian frames} and the so-called {\bf 1-Grassmannian frames}, which we introduce shortly and which have previously been considered in \cite{haas_phd, 2015arXiv150308690G}.  We study examples arising from both definitions, noting that they coincide in many cases but may manifest with notably different qualitative and quanitative properties in other situations. In addition, we note that $1$-Grassmannian frames enjoy a property that, in general, normal Grassmannian frames do not: invariance under the Naimark complement (see Theorem~\ref{thm_naim}).

\subsection{Frame Basics}
Let $\{e_j\}_{j=1}^m$ denote the canonical orthonormal basis for $\mathbb F^m$, where $\mathbb F = \mathbb R$ or $\mathbb C$ and let $I_m$ denote the $m \times m$ identity matrix. A set of vectors $\Phi = \{\phi_j\}_{j=1}^n \subset \mathbb F^m$ is a {\bf (finite) frame} if $\SPAN \{\phi_j\}_{j=1}^n = \mathbb F^m.$  
It is convenient to identify a frame  $\Phi = \{\phi_j\}_{j=1}^N$ with its {\bf synthesis matrix}
$$\Phi = \left[ \phi_1 \, \, \phi_2 \, \, ... \,\, \phi_n \right],$$
the $m \times n$ matrix with columns given by the {\bf frame vectors}.  Just as we have written
$\Phi = \{\phi_j\}_{j=1}^n $
and 
$\Phi = \left[ \phi_1 \, \, \phi_2 \, \, ... \,\, \phi_n \right]$ in the last sentence, we  continue with the tacit understanding that $\Phi$ represents both a matrix and a set of vectors.  Furthermore, we reserve the symbols $m$ and $n$ to refer to the dimension of the span of a frame and the cardinality of a frame, respectively.

A frame $\Phi = \{\phi_j\}_{j=1}^n$  is {\bf $a$-tight} if 
$$\Phi \Phi^* = \sum_{j=1}^n \phi_j  \phi_j^*= a I_m, \text{ for some } a>0$$ and it is {\bf unit-norm} if each frame vector has norm $\|\phi_j\|=1$. 
 If $\Phi$ is unit-norm and $a$-tight, then $a=\frac{n}{m}$ because
\begin{align*}
n &= \sum_{l=1}^m\sum_{j=1}^n |\langle e_l, \phi_j \rangle|^2 
\\
&= \sum_{l=1}^m\sum_{j=1}^n \TR(\phi_j \phi_j^* e_l e_l^*) = a \sum_{l=1}^m \| e_l \|^2 = am,
\end{align*}
in which case also we have the identity
\begin{equation}\label{eq_tight_un}
\sum\limits_{l=1}^n | \langle \phi_j, \phi_l \rangle |^2  = \frac{n}{m}, \text{ for every }  j \in \{1,2,...,n \}.
\end{equation}
%
%

Let $\UNFs{F}$ denote the space of unit-norm frames for $\mathbb F^m$  consisting of $n$ vectors  and let $\UNTFs{F}$ denote the space of unit-norm, tight frames for $\mathbb F^m$ consisting of $n$ vectors.
Given any set of unit vectors, $\Phi = \{ \phi_j \}_{j=1}^n \in \mathbb F^m$,  its {\bf coherence} is defined by
$$\mu(\Phi) = \max\limits_{j \neq l} |\langle \phi_j, \phi_l \rangle |.$$
We define and denote the {\bf Grassmannian constant} as
$$
\Grbar{F} = \min\limits_{\scriptscriptstyle \Phi \in \UNFs{F}} \mu(\Phi)
$$ 
and the {\bf $1$-Grassmannian constant} as
$$
\Gr{F} = \min\limits_{\scriptscriptstyle \Phi \in \UNTFs{F}} \mu(\Phi).
$$ 
Correspondingly, a frame $\Phi \in \UNFs{F}$ is a {\bf Grassmannian frame} if
$$
\mu(\Phi) = \Grbar{F}. 
$$
and a frame $\Phi \in \UNTFs{F}$ is a {\bf $1$-Grassmannian frame} if
$$
\mu(\Phi) = \Gr{F}.
$$

\begin{rem*}
A cautious reader might question the well-definedness of the preceding definitions - particularly whether or not a minimally coherent configuration of $n$ unit vectors, in fact, spans $\mathbb F^m$, thereby forming a frame; in the following, we sketch that, if a minimally coherent set of unit vectors exists that does not span $\mathbb F^m$,  then it can be ``rearranged'' to form a frame with the same coherence. 

Note that $\UNFs{F} \subset \left[\mathcal S_m(\mathbb F)\right]^n$, where $\left[\mathcal S_m(\mathbb F)\right]^n$ denotes the $n$-fold Cartesian product of unit spheres in $\mathbb F^m$, which is a compact set.  It follows by the extreme value theorem that a minimizer, $\Phi=\{\phi_j\}_{j=1}^n \in \left[\mathcal S_m(\mathbb F)\right]^n$, for the coherence function exists.  If $n=m$, then $\Phi$ must be an orthonormal basis and if $n=m+1$, then it is well-known~\cite{MR2964005} that $\Phi$ corresponds to an $m$-simplex (in particular, it spans), so assume that $n \geq m+2$, let
$
\mathcal H := \SPAN \{\phi_j\}_{j=1}^n
$ and suppose that $\dim(\mathcal H) =m-k$ 
 for some positive integer, $k<m$.  

In this case, fix any pair of vectors, $\phi_j$ and $\phi_l$, such that 
$
\left| \langle \phi_j, \phi_l \rangle \right|
$
equals the minimal coherence value, fix any $m-k$ vectors that form a basis for the subspace, $\mathcal H$, and  replace any choice of $k$ vectors from the remaining unfixed vectors with an orthonormal basis for $\mathcal H^\perp$.  The newly formed set of vectors retains the minimal coherence and spans $\mathbb F^m$, which shows that $\Grbar{F}$ is well-defined and that Grassmannian frames exist for all positive integers, $m \leq n$.

The well-definedness of $\Gr{F}$ and the corresponding existence of $1$-Grassmannian frames is more straightforward, because, indeed,  $\UNTFs{F}$ is a compact set, as it is the intersection of $\left[\mathcal S_m(\mathbb F)\right]^n$ with the corresponding Stiefel manifold, which is well-known to be compact~\cite{MR2364186}.
\end{rem*}

\section{Two notions of optimal incoherence}
The term ``Grassmannian frame'' was coined in \cite{MR1984549} as an acknowledgment of the connection with the  Grassmannian line packing problem.  Since then, the study of Grassmannian frames has manifested almost exclusively as the study of {\bf equiangular tight frames (ETFs)}~\cite{MR2410113, 5946867, Fickus:2015aa}, which are unit-norm, tight frames for which the set of pairwise absolute inner products between frame vectors is a singleton. Concerted research into this particular class of Grassmannian frames is largely attributed to the following theorem of Welch~\cite{welch:bound}, adapted for our frame-theoretic setting.

\begin{theorem}\label{thm_wel}(Welch, \cite{welch:bound}; see also  \cite{Fickus:2015aa})
\\
Given a unit-norm frame $\Phi=\{\phi_j\}_{j=1}^n \in \UNFs{F}$,
then
$$
\mu(\Phi) \geq \Grbar{F} \geq W_{n,m},
$$
where $W_{n,m} = \sqrt{\frac{n-m}{m(n-1)}}$ (the {\bf Welch constant}).
\end{theorem}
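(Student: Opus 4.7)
The first inequality $\mu(\Phi) \geq \Grbar{F}$ is immediate from the definition of $\Grbar{F}$ as an infimum, so the entire task is to establish the universal lower bound $\mu(\Phi) \geq W_{n,m}$ for every $\Phi \in \UNFs{F}$.

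My plan is the standard Gram matrix argument. Introduce the Gram matrix $G = \Phi^* \Phi$, an $n \times n$ positive semidefinite matrix whose diagonal entries are $1$ (by unit-norm) and whose off-diagonal entries are the inner products $\langle \phi_l, \phi_j \rangle$. The key structural fact I will exploit is that $G = \Phi^*\Phi$ and $\Phi\Phi^*$ share their nonzero eigenvalues, so $G$ has rank at most $m$, i.e.\ at most $m$ nonzero eigenvalues.

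From here I would chase two trace identities. First, $\TR(G) = n$, since the diagonal entries of $G$ are all $1$. Second, $\TR(G^2) = \sum_{j,l} |\langle \phi_j, \phi_l\rangle|^2 = n + \sum_{j \neq l} |\langle \phi_j,\phi_l\rangle|^2$. Applying Cauchy--Schwarz to the (at most $m$) nonzero eigenvalues $\lambda_1,\dots,\lambda_r$ of $G$ (with $r \leq m$) gives
\[
n^2 = \left( \sum_{i=1}^r \lambda_i \right)^2 \leq r \sum_{i=1}^r \lambda_i^2 \leq m\, \TR(G^2),
\]
so $\TR(G^2) \geq n^2/m$. Combining with the identity above yields
\[
\sum_{j \neq l} |\langle \phi_j,\phi_l\rangle|^2 \geq \frac{n^2}{m} - n = \frac{n(n-m)}{m}.
\]

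The conclusion is then a pigeonhole/averaging step: the left-hand side is a sum of $n(n-1)$ nonnegative terms each bounded above by $\mu(\Phi)^2$, so
\[
\mu(\Phi)^2 \geq \frac{n(n-m)}{m \cdot n(n-1)} = \frac{n-m}{m(n-1)} = W_{n,m}^2.
\]
Taking square roots gives $\mu(\Phi) \geq W_{n,m}$, and since this holds for every $\Phi \in \UNFs{F}$ it passes to the minimum, yielding $\Grbar{F} \geq W_{n,m}$.

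I expect no serious obstacle here; the only nontrivial ingredient is the Cauchy--Schwarz bound applied with the rank constraint $r \leq m$, which is really where the dimension $m$ enters the final expression. Everything else is bookkeeping of trace identities and averaging.
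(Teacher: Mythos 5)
The paper states this as a classical result of Welch and gives no proof of its own, so there is nothing to compare against directly; your argument is the standard one found in the cited references and it is correct. The trace identities $\TR(G)=n$ and $\TR(G^2)=n+\sum_{j\neq l}|\langle\phi_j,\phi_l\rangle|^2$, the rank-constrained Cauchy--Schwarz step $n^2\leq m\,\TR(G^2)$, and the final averaging over the $n(n-1)$ off-diagonal terms are all carried out correctly, and the passage to $\Grbar{F}\geq W_{n,m}$ by taking the minimum over $\UNFs{F}$ is valid.
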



  Given the goal of constructing a Grassmannian frame in $\UNFs{F}$, the assumption (or hope) that it might manifest as an ETF is appealing because then Theorem~\ref{thm_wel} provides three highly rigid construction principles. {\bf (i)} Only one value may occur among the set of pairwise absolute inner products, {\bf (ii)} this value must be the Welch constant and {\bf (iii)} the frame must be tight.

We remark that Principle~{\bf (ii)} is perhaps the greatest motivation for study of ETFs.  In most cases, little is known about the value of the Grassmannian constant.  Without an analytic expression for $\Grbar{F}$, it is difficult to determine whether a frame with seemingly low coherence is, in fact, a Grassmannian frame.

However, Principle {\bf (iii)} is most relevant for our current narrative,  because it implies that a Grassmannian frame with coherence equal to the Welch constant must be tight.
Conversely, the set containment $\UNTFs{F} \subset \UNFs{F}$ implies $\Gr{F} \geq \Grbar{F}$. Together, these observations show that every ETF is simultaneously a Grassmannian frame and a $1$-Grassmannian frame. A simple generalization of these observations yields the following. 

\begin{theorem}\label{thm_grvsgr1}
If $\Phi$ is a tight Grassmannian frame, then $\Phi$ is a $1$-Grassmannian frame.
\end{theorem}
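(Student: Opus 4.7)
The plan is to exploit the set containment $\UNTFs{F} \subset \UNFs{F}$, which was already noted immediately before the theorem, in order to squeeze $\mu(\Phi)$ between the two constants $\Grbar{F}$ and $\Gr{F}$.

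First, I would unpack the hypothesis: since $\Phi$ is by assumption a \emph{tight} Grassmannian frame, it is unit-norm (as all Grassmannian frames are), it is tight, and it achieves the Grassmannian constant, i.e. $\mu(\Phi) = \Grbar{F}$. In particular, $\Phi \in \UNTFs{F}$, which is the key membership that lets us compare against $\Gr{F}$.

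Next, I would invoke the two inequalities coming from the set containment. On one hand, taking minima over a smaller set cannot decrease the result, so $\Gr{F} \geq \Grbar{F}$. On the other hand, since $\Phi \in \UNTFs{F}$, the coherence $\mu(\Phi)$ is at least the minimum over $\UNTFs{F}$, so $\mu(\Phi) \geq \Gr{F}$. Chaining these with the hypothesis gives
$$
\Grbar{F} = \mu(\Phi) \geq \Gr{F} \geq \Grbar{F},
$$
forcing $\mu(\Phi) = \Gr{F}$. Since $\Phi \in \UNTFs{F}$ and attains $\Gr{F}$, it is by definition a $1$-Grassmannian frame.

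There is no genuine obstacle here; the argument is essentially a two-line chase through the definitions, with the containment $\UNTFs{F} \subset \UNFs{F}$ doing all of the real work. The only thing worth stating carefully is why $\Phi$ actually lies in $\UNTFs{F}$, which is immediate from the word ``tight'' in the hypothesis combined with the unit-norm property built into the definition of a Grassmannian frame.
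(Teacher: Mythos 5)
Your proof is correct and matches the paper's intended argument: the paper leaves the theorem as an immediate consequence of the containment $\UNTFs{F} \subset \UNFs{F}$ (which gives $\Gr{F} \geq \Grbar{F}$) together with the fact that a tight Grassmannian frame lies in $\UNTFs{F}$ and attains $\Grbar{F}$. Your squeeze $\Grbar{F} = \mu(\Phi) \geq \Gr{F} \geq \Grbar{F}$ is exactly the ``simple generalization'' the paper alludes to.
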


Although Theorem~\ref{thm_grvsgr1} is obvious, our main thesis lies with the falsehood of its converse. As we demonstrate below, there are cases where no tight Grassmannian frames exist.  Within the context of signal analysis, this means we ocasionally face the problem of trading the ``perfect reconstruction'' endowed by tightness with the ``robustness against error'' endowed by low coherence.   Thus, we think of $1$-Grassmannian frames as the ideal reconciliation of these two issues: they minimize coherence without sacrificing tightness.

Although $1$-Grassmannian frames have been considered before~\cite{haas_phd, 2015arXiv150308690G}, we believe that they have received little overall attention so far.  Therefore, for the remainder of the paper, we use known examples to highlight the peculiar relationship between Grassmannian frames and their counterparts, $1$-Grassmannian frames. We conclude by observing that the useful Naimark complement principle holds for $1$-Grassmannian frames and apply it to obtain an answer to Question~\ref{quest1} below.

\subsection{On the falsity of the converse of Theorem~\ref{thm_grvsgr1}}
 To justify the claim that the study of $1$-Grassmannian frames is inequivalent to the study of Grassmannaian frames, we disprove the converse of Theorem~\ref{thm_grvsgr1}. Fortunately, there is little work to be done, as the needed counterexample is already provided in \cite{BK06} where the authors studied certain Grassmannian frames in $\mathbb R^2$ and $\mathbb R^3$.

 It well-known~\cite{Fickus:2015aa, MR0023530} that if $\Phi  \in \overline \Omega_{6,3}(\mathbb R)$ corresponds to six non-antipodal vertices from a regular icosahedron, then $\Phi$ is an ETF. Given such a $\Phi$, then by our previous observations, $\Phi$ is both  a Grassmannian frame and a $1$-Grassmannian frame, and 
$$\mu(\Phi)=\mu_{6,3}  (\mathbb R) =\overline{\mu}_{6,3} (\mathbb R) =W_{6,3}=\frac{1}{\sqrt 5}.$$
Using a combination of analytic and exhaustive methods, the authors of \cite{BK06} deduced the following two key observations.
\\  
{\it Observation (i)} The Grassmannian constant for five unit vectors in $\mathbb R^3$ is the same as six vectors in $\mathbb R^3$.  That is,
$$
\overline\mu_{5,3} (\mathbb R) = \overline\mu_{6,3} (\mathbb R)= \frac{1}{\sqrt 5}.
$$ 
{\it Observation (ii)} There are no tight frames consisting of five unit vectors in $\mathbb R^3$ with coherence that achieves this constant.

Hence,  discarding a single vector from $\Phi$ yields a new Grassmannian frame; however,  Observation~{(ii)} shows that a Grassmannian frame is not tight and, in fact, that any tight frame in $\mathbb R^3$ composed of five unit vectors must have strictly greater coherence. Ipso facto, the authors~\cite{BK06} had proven that $\overline\mu_{5,3} (\mathbb R)$ is stictly less that $\mu_{5,3} (\mathbb R) $, thereby demonstrating falsehood for the converse of Theorem~\ref{thm_grvsgr1}.
 
Within the context of this note,  a natural question emerges from their observations.
\begin{question}\label{quest1}
Since $\mu_{5,3} (\mathbb R) > \overline\mu_{5,3} (\mathbb R) = \frac{1}{\sqrt 5}$, what is the value $\mu_{5,3} (\mathbb R)$?
\end{question}
We will use the {\it Naimark complement} to answer this question in the final section.

\subsection{Interplay between the two notions of ``low coherence''}
Although Grassmannian frames and $1$-Grassmannian frames are, in general, different objects, it is surprising how often they coincide.  Because it occurs so often, we say that a Grassmannian frame is a {\bf universal Grassmannian frame} if it is also a $1$-Grassmannian frame. We list a few known infinite families of universal Grassmannian frames.

({\it ETFs}) Not surprisingly, the first family of universal Grassmannian frames we mention are the ETFs.  Although this class of Grassmannian frames remains shrouded with open questions~\cite{Fickus:2015aa},  numerous infinite families of ETFs have been constructed~\cite{MR2235693, fickus2016tremain, MR2890902, Boumed_2014, MR2639246, Fickus:2015aa} over the last few decades.  As we have disussed, ETFs are universal Grassmannian frames.

({\it OGFs}) Other infinite families of universal Grassmannian frames come as so-called {\bf orthoplectic Grassmannian frames (OGFs)}, including {\it maximal sets of mutually unbiased bases} \cite{MR2778089} and two other families based on relative difference set constructions from \cite{bodmann2015achieving}.  An {\bf orthoplectic Grassmannian frame (OGF)} is a Grassmannian frame with sufficiently large cardinality and with coherence equal to the {\bf orthoplex constant}, as defined in the following theorem.

\begin{theorem}\label{thm_orth}({\it Orthoplex bound}, \cite{MR0074013, MR1418961})
\\
If $\mathbb F=\mathbb R$, let $d=\frac{m(m+1)}{2}$, or if $\mathbb F=\mathbb C$, let $d=m^2$.
\\
Given a unit-norm frame $\Phi\in\UNFs{F}$ with $n~>~d$,
then
$$
\mu(\Phi) \geq \Grbar{F} \geq O_{n,m},
$$
where $O_{n,m} = \frac{1}{\sqrt{m}}$ (the {\bf orthoplex constant}).  
\end{theorem}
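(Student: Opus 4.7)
The plan is to lift the frame into the real inner product space of self-adjoint $m \times m$ matrices via the outer-product map $\phi_j \mapsto P_j := \phi_j \phi_j^*$, and then invoke a classical simplex-type bound in that larger space. The key point is that this ambient space has real dimension exactly $d$: namely $m^2$ in the complex Hermitian case and $\frac{m(m+1)}{2}$ in the real symmetric case, by counting independent real entries.

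First I would equip this matrix space with the Hilbert--Schmidt inner product $\langle A, B\rangle = \TR(AB)$. Each $P_j$ is then a unit vector with $\TR(P_j)=1$ and $\langle P_j, P_l\rangle = |\langle \phi_j, \phi_l\rangle|^2$. Since every $P_j$ has the same component $\frac{1}{\sqrt{m}}$ along the unit vector $\frac{1}{\sqrt{m}}\,I$, subtracting it off yields traceless matrices $\tilde Q_j := P_j - \frac{1}{m}I$ that satisfy $\|\tilde Q_j\|^2 = 1 - \frac{1}{m}$ and $\langle \tilde Q_j, \tilde Q_l\rangle = |\langle \phi_j, \phi_l\rangle|^2 - \frac{1}{m}$. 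These sit as $n$ nonzero vectors in a $(d-1)$-dimensional real inner product space.

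I would then proceed by contraposition. Assume $\mu(\Phi) < \frac{1}{\sqrt{m}}$; then every pairwise inner product $\langle \tilde Q_j, \tilde Q_l\rangle$ is strictly negative. I would apply the classical fact that $\mathbb R^{d-1}$ admits at most $d$ nonzero vectors with pairwise strictly negative inner products. The short argument there takes a nontrivial linear dependence among any $d$ such vectors, splits the coefficients by sign into a positive part $P$ and a negative part $N$, and expands $\bigl\|\sum_{i \in P} c_i \tilde Q_i\bigr\|^2 = \bigl\langle \sum_{i \in P} c_i \tilde Q_i,\, \sum_{j \in N} |c_j|\tilde Q_j\bigr\rangle$; the right-hand side is a sum of products of two positives and one strict negative, so it is negative unless one of $P, N$ is empty. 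Pairing the residual one-sided combination against any remaining $\tilde Q_j$ yields a contradiction with strict negativity. Hence $n \leq d$, contradicting $n > d$, so $\mu(\Phi) \geq \frac{1}{\sqrt m} = O_{n,m}$; since $\Phi$ was arbitrary, the same bound holds for $\Grbar{F}$.

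The main conceptual step is the outer-product lifting, which converts an inner-product inequality in $\mathbb F^m$ into a Euclidean negative-correlation inequality in the traceless part of self-adjoint $m \times m$ matrix space; once this lift is in place, no serious obstacle remains. The only items requiring care are the Hilbert--Schmidt norm identities and the real dimension count (which is precisely where the split between the real and complex cases enters).
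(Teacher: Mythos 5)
Your proposal is correct. The paper itself does not prove this theorem --- it states it with a citation to Rankin and to Conway--Hardin--Sloane --- and your argument is precisely the standard one from those sources: lift to the traceless part of the real vector space of self-adjoint matrices (of real dimension $d-1$), observe that $\mu(\Phi)<\frac{1}{\sqrt m}$ would force $n$ pairwise strictly negatively correlated nonzero vectors there, and invoke the classical bound that a $(d-1)$-dimensional real inner product space admits at most $d$ such vectors. All the supporting computations check out: $\langle \tilde Q_j,\tilde Q_l\rangle = |\langle\phi_j,\phi_l\rangle|^2-\frac1m$, $\|\tilde Q_j\|^2=1-\frac1m>0$, the dimension counts $m^2$ and $\frac{m(m+1)}{2}$ are where the real/complex split enters, and your sign-splitting proof of the negative-correlation lemma (using $n>d$ to supply the extra vector to pair against the residual one-sided combination) is sound. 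So there is nothing to reconcile with the paper; you have simply supplied the proof the paper delegates to its references.
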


Unlike the characterization of ETFs given by the Welch bound in Theorem~\ref{thm_wel}, Theorem~\ref{thm_orth} does not imply that an OGF is necessarily tight.  In other words, every OGF is a Grassmannian frame but not necessarily a $1$-Grassmannian frame.   Indeed, for most OGFs belonging to the families of universal Grassmannian frames just mentioned, it is possible to delete several vectors  such that the remaining subset is still an OGF~\cite{bodmann2015achieving}; however, the tightness property is almost always lost in these cases.  This leads to several questions, but we focus on one.

\begin{question}
Given the existence of an OGF $\Phi \in \Omega_{n,m}(\mathbb F)$, under what conditions does $\Gr{F}=\Grbar{F}$?
\end{question}

Although we are far from a complete answer to this question, an example from a recent work~\cite{2016arXiv160502012C} yields  a surprising insight, which we interpret as a partial answer.  In \cite{2016arXiv160502012C}, the authors construct two distinct orthoplectic Grassmannian frames $\Phi, \Psi \in \overline \Omega_{5,2}(\mathbb C)$, where $\Phi$ is tight while $\Psi$ is not.  Thus, $\Phi$ is a univeral Grassmannian frame while is $\Psi$ not.  Moreover, the cardinality of the {\bf angle set} --- or set of absolute pairwise inner products --- for $\Phi$ differs from that of $\Psi$.  Thus, there are instances where $\Gr{F} = \Grbar{F}$, but Grassmannian frames may manifest with vastly different geometric and spectral properties.  In particular, the coexistence of a universal Grassmannian frame with a nonuniversal Grassmannian frame is possible.

\section{The Naimark complement}
A nice aspect from the theory of tight frames is that many geometric properties are preserved under the so-called {\it Naimark complement} technique \cite{MR2964005, MR3062405}.  For us, this manifests as a statement about  the $1$-Grassmannian constant.

\begin{theorem}\label{thm_naim}({\it Naimark complement})
If a $1$-Grassmannian frame $\Phi \in \UNTFs{F}$ has coherence $\mu_{n,m}(\mathbb F)$, then a $1$-Grassmannian frame $\Phi' \in \Omega{\scriptstyle n,n - m}({\mathbb F})$ exists, and its coherence is $\frac{n-m}{m} \Gr{F}$.
More succinctly,
$$
\mu{\scriptstyle n,n - m}({\mathbb F}) = \frac{n-m}{m}\mu{\scriptstyle n,m}({\mathbb F}).
$$
\end{theorem}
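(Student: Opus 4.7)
The plan is to invoke the standard Naimark complement construction and track its effect on coherence. Given $\Phi \in \UNTFs{F}$ with $\Phi\Phi^* = \tfrac{n}{m} I_m$, first rescale to the Parseval frame $\tilde\Phi := \sqrt{m/n}\,\Phi$, whose $m$ rows form an orthonormal set in $\mathbb{F}^n$. Extending them to an orthonormal basis of $\mathbb{F}^n$ and stacking the remaining $n-m$ vectors as rows produces an $(n-m) \times n$ matrix $\tilde\Psi$. Setting $\Phi' := \sqrt{n/(n-m)}\,\tilde\Psi$, a direct check of column norms and of $\Phi'{\Phi'}^*$ verifies $\Phi' \in \Omega_{n,n-m}(\mathbb{F})$.

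Next, the unitarity of the stacked $n \times n$ matrix yields $\tilde\Phi^* \tilde\Phi + \tilde\Psi^* \tilde\Psi = I_n$, which after unwinding the normalizations becomes
\[
{\Phi'}^*\Phi' \;=\; \tfrac{n}{n-m}\,I_n \;-\; \tfrac{m}{n-m}\,\Phi^*\Phi.
\]
Reading off the off-diagonal entries gives $\langle \phi'_l,\phi'_j\rangle = -\tfrac{m}{n-m}\,\langle \phi_l,\phi_j\rangle$ for every $l \ne j$, so coherence is scaled by an explicit constant determined only by $m$ and $n-m$ as one passes through the Naimark complement. This is the key quantitative link between $\mu(\Phi)$ and $\mu(\Phi')$.

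To upgrade this to a statement about minimizers, I would exploit the fact that the Naimark complement is, up to a unitary on $\mathbb{F}^{n-m}$, an involution between $\UNTFs{F}$ and $\Omega_{n,n-m}(\mathbb{F})$. Applying the same construction to an arbitrary $\Psi \in \Omega_{n,n-m}(\mathbb{F})$ yields some $\Psi^c \in \UNTFs{F}$ with the analogous coherence identity, and $(\Psi^c)^c$ is unitarily equivalent to $\Psi$. Since $\Phi$ minimizes coherence on $\UNTFs{F}$, $\mu(\Phi) \le \mu(\Psi^c)$ for every such $\Psi$; multiplying through by the universal scaling constant yields $\mu(\Phi') \le \mu(\Psi)$, establishing $\Phi'$ as a $1$-Grassmannian frame in $\Omega_{n,n-m}(\mathbb{F})$, and conversely witnessing that the $1$-Grassmannian constants at $(n,m)$ and $(n,n-m)$ determine each other.

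The main obstacle is purely bookkeeping: pinning down the correct multiplicative constant in the Gram-matrix identity and verifying involutivity modulo unitaries. Both reduce to elementary linear algebra once the Parseval rescaling is fixed, and since coherence is unitarily invariant, the ambiguity in the ``extend to an orthonormal basis'' step is immaterial to the final coherence values. The transfer of the optimality property then follows formally from the scaling identity and the involutive bijection.
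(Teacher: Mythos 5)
Your proposal is correct and takes essentially the same route as the paper's proof: construct the Naimark complement of the Parseval rescaling (the paper realizes $I_n - \frac{m}{n}\Phi^*\Phi$ as a scaled Gramian via the SVD, you extend the orthonormal rows to an orthonormal basis of $\mathbb{F}^n$ — two equivalent formulations), observe that off-diagonal Gram entries are multiplied by $-\frac{m}{n-m}$, and transfer the minimality by reversing the construction. One remark: your scaling constant $\frac{m}{n-m}$ is the correct one and matches both the paper's own computation of $\mu(\Psi)$ and its application yielding $\mu_{5,3}(\mathbb{R}) = \frac{2}{3}\cos(\pi/5)$; the factor $\frac{n-m}{m}$ printed in the theorem statement is a typo, so do not be alarmed by the apparent mismatch.
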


\begin{proof} We sketch the proof.
Because $\Phi$ is tight, it is easily checked that $G = \frac{m}{n}\Phi^* \Phi$ is an orthogonal projection onto an $m$-dimensional subspace of $\mathbb F^n$.  Moreover, $G$ is the so-called {\bf Gramian} of $\Phi$, meaning that its entries encode the  inner products between the frame vectors of $\Phi$.  Consider the projection $I_n-G$, which projects onto the $(n-m)$-dimensional subspace orthogonal to the range of $G$.  Since the largest absolute value among $G$'s off-diagonal entries is $\frac{m}{n}\mu_{n,m} (\mathbb F)$, it follows that the largest absolute value among the off-diagonal entries of $I_n-G$ is also  $\frac{m}{n}\mu_{n,m} (\mathbb F)$.  By applying the singular value decomposition to $I_n-G$, exploiting our identification of frames with matrices, and rescaling, it follows that a tight frame $\Psi \in \Omega{\scriptstyle n,n-m}(\mathbb F)$ exists such that $\frac{n-m}{n}\Psi^* \Psi = I_n-G$.
Thus, the coherence of $\Psi$ is computed by rescaling,
$$
\mu(\Psi) 
= \frac{n}{n-m}\cdot \frac{m}{n}\mu_{n,m}({\mathbb F})
=
\frac{m}{n-m}\mu_{n,m}({\mathbb F}).$$  
By way of contradiction, if 
$$\mu{\scriptstyle n,n - m}({\mathbb F}) < \frac{m}{n-m}\mu_{n,m}(\mathbb F),$$
then reversing this argument would yield a tight frame $\Phi' \in \UNTFs{F}$ with coherence less that $\Gr{F}$, contradicting our assumption that $\Gr{F}$ is the $1$-Grassmannian constant for this setting.
\end{proof}

This leads to an answer to Question~\ref{quest1}.
Incidentally, we need another result from \cite{BK06}, where they showed that the five vertices of a regular pentagon form a tight Grassmannian frame in $\mathbb R^2$.  In particular,
$$
\mu_{5,2}({\mathbb R})=\overline \mu_{5,2}({\mathbb R})= \cos( \pi/5),
$$
and the frame obtained from a regular pentagon is a universal Grassmannian frame in $\Omega_{5,2}(\mathbb F)$.  Using this along with the Naimark complement principle from Theorem~\ref{thm_naim}, we conclude that
$$
\mu_{5,3} (\mathbb R) = \frac{5-3}{3} \mu_{5,2}(\mathbb R) = \frac 2 3 \cos( \pi/5).
$$

As final a remark,  this example demonstrates that  it is possible for the Naimark complement of a non-universal 1-Grassmannian frame to be a universal Grassmannian frame.

\section*{Acknowledgment}

The authors were supported by
 NSF DMS 1307685;  NSF ATD 1321779; and ARO W911NF-16-1-0008.

\bibliographystyle{IEEEtran}
%
%

\bibliographystyle{IEEEtran}
\bibliography{sampta}
%
%
%

\end{document}